\definecolor{green}{rgb}{0,0.8,0} 
\renewcommand{\Re}{\mathrm{Re}}
\renewcommand{\Im}{\mathrm{Im}}
\newcommand{\bfn}{{\bf n}}
\newcommand{\bfv}{{\bf v}}
\newcommand{\bfx}{{\bf x}}
\newcommand{\bfy}{{\bf y}}
\newcommand{\bbC}{\mathbb C}
\newcommand{\bbD}{\mathbb D}
\newcommand{\bbH}{\mathbb H}
\newcommand{\bbR}{\mathbb R}
\newcommand{\calA}{\mathcal A}
\newcommand{\calM}{\mathcal M}
\newcommand{\calN}{\mathcal N}
\newcommand{\Zed}{\mathfrak{Z}}
\newcommand{\zed}{\mathfrak{z}}
\newcommand{\zbar}{{\con z}}
\definecolor{deepgreen}{cmyk}{1,0,1,0.5}
\newcommand{\R}{\mathbb{R}}
\newcommand{\Rmnum}[1]{\expandafter\@slowromancap\romannumeral #1@}
\newcommand{\con}{\overline}
\newcommand{\abs}[1]{\left\lvert{#1}\right\rvert}
\newcommand{\Aligns}[1]{\begin{align*}\begin{split} #1 \end{split}\end{align*}}
\newcommand{\Del}[1]{}
\newcommand{\pt}{&}
\numberwithin{equation}{section}
\newtheorem{theorem}{Theorem}[section]
\newtheorem{lemma}[theorem]{Lemma}
\newtheorem{proposition}[theorem]{Proposition}
\newtheorem{remark}[theorem]{Remark}
\newcommand{\pa}{\triangleright}
\renewcommand\Re{\mathrm{Re}\,}
\renewcommand\Im{\mathrm{Im}\,}
\newcommand{\ep}{\varepsilon}
\newcommand{\AV}{\mathcal{AV}}
\newcommand{\alphap}{{\alpha^\prime}}
\newcommand{\betap}{{\beta^\prime}}
\newcommand{\Zedbar}{\overline{\Zed}}
\newcommand{\Hbar}{{\overline{H}}}
\newcommand{\dt}{\partial_t}
\newcommand{\da}{\partial_\alpha}
\renewcommand{\pt}{\partial_t}
\renewcommand{\pa}{\partial_\alpha}
\newcommand{\pap}{\partial_\alphap}
\newcommand{\Gbar}{{\overline{G}}}
\newcommand{\zedbar}{{\overline{\zed}}}
\newcommand{\tOmega}{\widetilde{\Omega}}
\begin{document}
\title[The Euler-Poisson System with Constant Vorticity]{On the Motion of a Self-Gravitating Incompressible Fluid with Free Boundary and Constant Vorticity: An Appendix}

\author{Lydia Bieri}
\author{Shuang Miao}
\author{Sohrab Shahshahani}
\author{Sijue Wu}

\begin{abstract}
In a recent work \cite{BMSW1} the authors studied the dynamics of the interface separating a vacuum from 
an inviscid incompressible fluid, subject to the self-gravitational force and neglecting surface tension, in two space dimensions. The fluid is additionally assumed to be irrotational, and we proved that for data which are size $\epsilon$ perturbations of an equilibrium state, the lifespan $T$ of solutions satisfies  $T \gtrsim \epsilon^{-2}$. The key to the proof is to find a nonlinear transformation of the unknown function and a coordinate change, such that the equation for the new unknown in the new coordinate system has no quadratic nonlinear terms. For the related irrotational gravity water wave equation with constant gravity the analogous transformation was carried out by the last author in \cite{Wu3}. While our approach is inspired by the last author's work \cite{Wu3}, the self-gravity in the present problem is a new nonlinearity which needs  separate investigation. Upon completing \cite{BMSW1} we learned of  the work of Ifrim and Tataru \cite{IfrTat3} where the gravity water wave equation with constant gravity and \emph{constant vorticity} is studied and a similar estimate on the lifespan of the solution is obtained. In this short note we demonstrate that our transformations in \cite{BMSW1} can be easily modified to allow for \emph{nonzero constant vorticity}, and a similar energy method as in \cite{BMSW1} gives an estimate $T\gtrsim\epsilon^{-2}$ for the  lifespan $T$ of solutions with data which are size $\epsilon$ perturbations of the equilibrium. In particular, the effect of the constant vorticity is an extra  linear term with constant coefficient in the transformed equation, which can be further transformed away by a bounded linear transformation. 
This note serves as an appendix to the aforementioned work of the authors.
\end{abstract}

\thanks{Support of the National Science Foundation grants  DMS-1253149 for the first and second, NSF-1045119 for the third, and DMS-1361791 for the fourth authors is gratefully acknowledged. The third author was also supported by the NSF under Grant~No.0932078000 while in residence at the MSRI in Berkeley, CA during Fall 2015. The authors thank Robert Krasny for his help to improve the exposition of this note.}

\maketitle

\section{Introduction}
We study the motion of the interface separating a vacuum from an inviscid, incompressible fluid, subject to the self-gravitational force in two spatial dimensions. We assume that the fluid domain is bounded and simply connected, the surface tension is zero, and the initial vorticity of the fluid is a constant. Denoting the fluid domain by $\Omega(t),$ the fluid velocity by $\bfv,$ and the pressure by $P$, the evolution is governed by the Euler-Poisson system:

\begin{equation}\label{Euler eq}
\begin{cases}
\bfv_{t}+(\bfv\cdot\nabla)\bfv=-\nabla P-\nabla\phi\quad&\mbox{in}\quad \Omega(t),t\geq0,\\
\textrm{div}\,\bfv=0,\quad\textrm{curl}\,\bfv=2\omega_0\in\bbR\quad &\mbox{in}\quad \Omega(t), t\geq0,\\
P=0\quad &\textrm{on}\quad \partial\Omega(t),
\end{cases}
\end{equation}
where the self-gravity Newtonian potential satisfies 

\begin{equation}\label{Poisson eq}
\begin{cases}
\Delta\phi=2\pi\chi_{\Omega(t)},\\
\nabla\phi(\bfx)=\iint_{\Omega(t)}\frac{\bfx-\bfy}{\left|\bfx-\bfy\right|^{2}}d\bfy.
\end{cases}
\end{equation}
Note that applying the curl operator to the first line in equation \eqref{Euler eq} implies that the vorticity $\omega:=\mathrm{curl} \,\bfv$ satisfies $\omega_t+\bfv\cdot\nabla\omega=0,$ and therefore if $\bfv(0)$ has constant vorticity $2\omega_0$,  the same holds for $\bfv(t)$ for all times at which the latter is defined, that is $\mathrm{curl}\,\bfv=2\omega_0.$ Similarly, the incompressibility of the flow, $\mathrm{div}\,\bfv=0,$ implies that the area of the domain enclosed by the fluid does not change during the evolution. Without loss of generality we fix the normalization $|\Omega|=\pi.$ We also assume that $\omega_0^2<\pi.$ As we will show below this will be necessary to ensure the validity of the Taylor sign condition which is necessary for local well-posedness. When the vorticity is assumed to be zero a family of time-independent equilibrium solutions is given by perfect balls moving with constant velocity. In the recent preprint \cite{BMSW1} we proved that for data which are a perturbation of size $\epsilon$ of these equilibria, the lifespan $T$ of the solution satisfies the lower bound $T\gtrsim \epsilon^{-2}.$ In the irrotational case the velocity satisfies $\Delta \bfv=0$ in $\Omega(t)$ and the Euler-Poisson system \eqref{Euler eq}-\eqref{Poisson eq} can be reduced to a system on the boundary $\partial\Omega(t).$ The key to the estimate above on the lifespan of the solution is then to find a new unknown function and a change of coordinates such that the new unknown satisfies a nonlinear equation in the new coordinates with no quadratic terms in the nonlinearity. For the related irrotational gravity water wave equation with constant gravity the analogous transformation was carried out by the last author in \cite{Wu3}. While our approach is inspired by the last author's work \cite{Wu3}, the self-gravity in our problem is a new nonlinearity which requires separate investigation.  Upon completion of \cite{BMSW1} we learned of the work of  Ifrim and Tataru \cite{IfrTat3}, where the gravity water wave equation with constant gravity and \emph{constant vorticity} is studied and a lifespan estimate  $T\gtrsim\epsilon^{-2}$ is proved for data which are perturbations of size $\epsilon$ of the equilibrium.  In this short note we demonstrate that our transformations in \cite{BMSW1} can be easily modified to allow for nonzero constant vorticity, and a similar energy method as in \cite{BMSW1} gives  an estimate $T\gtrsim\epsilon^{-2}$ for the  lifespan $T$ of solutions with data which are size $\epsilon$ perturbations of the equilibrium.

In the remainder of this note we adopt the notation introduced in \cite{BMSW1} and to avoid repetition refer the reader to \cite{BMSW1} for the definitions of the symbols we use. Note that the vector field $\bfv_0:=\omega_0(y, -x)$ satisfies $\mathrm{curl}\,\bfv_0=2\omega_0$ and $\mathrm{div}\,\bfv_0=0,$ and therefore $\frak v:=\bfv-\bfv_0$ is curl and divergence free. Using complex variable notation it follows that $z_t+i\omega_0z$ is the boundary value of an anti-holomorphic function in $\Omega,$ or in other words $(I-\Hbar)(z_t+i\omega_0z)=0,$ where $H$ denotes the Hilbert transform for $\partial\Omega.$ With this observation, and with the notation 

\Aligns{
a:=-\frac{1}{|z_\alpha|}\frac{\partial P}{\partial {\bold n}},
}
the system \eqref{Euler eq} reduces to the following system on the boundary $\partial\Omega:$

\begin{equation}\label{z cv eq}
\begin{cases}
z_{tt}+iaz_\alpha=-\frac{\pi}{2}(I-\Hbar)z\\
H(\zbar_t-i\omega_0\zbar)=\zbar_t-i\omega_0\zbar
\end{cases}.
\end{equation}
We refer the reader to \cite{BMSW1} for the derivation of \eqref{z cv eq}. Note that $z(t,\alpha)=e^{-i\omega_0t+i\alpha}$ is a solution to \eqref{z cv eq} with $a=\pi-\omega_0^2.$ It follows that for the Taylor sign condition $\frac{\partial P}{\partial \bfn}<0$ to hold we need to impose the condition $\omega_0^2<\pi.$ The following theorem is the main result of this note.

\begin{theorem}\label{thm: main cv}
Let $\Omega_0$ be a bounded simply-connected domain in $\bbC$ with smooth boundary $\partial\Omega_0$ satisfying $\abs{\Omega_0}=\pi,$ and denote the associated Hilbert transform by $H_0.$ Suppose $z_0(\alpha)=e^{i\alpha}+\epsilon f(\alpha)$ is a parametrization of $\partial\Omega_0$ and $z_1(\alpha)=v_0-i\omega_0e^{i\alpha}+ \epsilon g(\alpha)$ where $f$ and $g$ are smooth, $H_0(\zbar_1-i\omega_0\zbar_0)=\zbar_1-i\omega_0\zbar_0,$ $v_0\in\bbC$ is a constant, and $\omega_0^2<\pi$. Then there is $T>0$ and a unique classical solution $z(t,\alpha)$ of \eqref{z cv eq} on $[0,T)$ satisfying $(z(0,\alpha),z_t(0,\alpha))=(z_0(\alpha),z_1(\alpha)).$  Moreover if $\epsilon>0$ is sufficiently small the solution can be extended at least to $T^*=c\epsilon^{-2}$ where $c$ is a constant independent of $\epsilon.$
\end{theorem}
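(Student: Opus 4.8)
The plan is to follow the two-step strategy of \cite{BMSW1}: first establish local-in-time existence and uniqueness of a classical solution, and then derive the lifespan bound $T^\ast \gtrsim \epsilon^{-2}$ by quasilinearizing \eqref{z cv eq}, performing the nonlinear change of unknown together with the coordinate change that removes the quadratic nonlinearity, and closing a cubic energy estimate with a bootstrap argument. The only genuinely new ingredient relative to \cite{BMSW1} is the term $-i\omega_0\zbar$ appearing in the holomorphicity constraint of \eqref{z cv eq} and the analogous $i\omega_0 z$ contributions it generates; the main task is therefore to track these constant-vorticity terms through each transformation and confirm that they perturb the irrotational structure only by bounded linear operators.

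For local well-posedness I would argue as in \cite{BMSW1}. Writing the first equation of \eqref{z cv eq} as $z_{tt} + i a z_\alpha = -\tfrac{\pi}{2}(I - \Hbar)z$ and recalling that near the equilibrium $z(t,\alpha) = e^{-i\omega_0 t + i\alpha}$ one has $a = \pi - \omega_0^2 > 0$ precisely under the hypothesis $\omega_0^2 < \pi$, the Taylor sign condition $a>0$ holds for data close to the equilibrium. The constraint $H(\zbar_t - i\omega_0\zbar) = \zbar_t - i\omega_0\zbar$ is a holomorphicity/compatibility condition propagated by the flow, exactly as in the irrotational case but with $\zbar_t$ replaced by $\zbar_t - i\omega_0\zbar$. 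Since this replacement alters only a bounded linear term, the standard quasilinear a priori energy estimates and the iteration scheme carry over, yielding a classical solution on some $[0,T)$ attaining the prescribed data $(z_0,z_1)$.

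For the lifespan estimate the core is to reproduce the transformation of \cite{Wu3,BMSW1}. I would introduce the good unknown $\Theta$ obtained by applying the nonlinear change of variables built from $(I-\Hbar)$ composed with the diffeomorphism $\kappa(t,\cdot)$ passing to the favorable coordinate, and then compute the equation satisfied by $\Theta$. In the irrotational problem this yields a wave-type equation whose nonlinearity $\mathcal{C}$ is cubic,
\[
\partial_t^2\Theta + i a\,\partial_\alpha\Theta + \tfrac{\pi}{2}(I-\Hbar)\Theta = \mathcal{C},
\]
and carrying the same computation with $\zbar_t$ everywhere replaced by $\zbar_t - i\omega_0\zbar$ produces the identical cubic right-hand side together with one extra \emph{linear} term with constant coefficient, schematically $c\,\omega_0\,\partial_t\Theta$. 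Because this term has constant coefficient, it is removed by a bounded, invertible linear change of unknown --- a time-dependent phase conjugation $\Theta \mapsto e^{i c\omega_0 t}\Theta$ reflecting the rigid rotation $e^{-i\omega_0 t}$ of the equilibrium --- after which the new unknown again satisfies an equation with purely cubic nonlinearity. The point to verify carefully is that this conjugation is bounded on the Sobolev spaces in which the energy is defined and commutes with $(I-\Hbar)$ up to errors that are themselves cubic or better.

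Finally I would define an energy $E(t)$ equivalent to the Sobolev norm of $(\Theta,\partial_t\Theta)$ used in \cite{BMSW1} (of the schematic type $\|\partial_t\Theta\|_{H^s}^2 + \|\Theta\|_{H^{s+1/2}}^2$) and show $\frac{d}{dt}E(t) \lesssim E(t)^2$, exploiting the cubic structure of the nonlinearity: two factors of the solution are placed in $L^\infty$-type norms (of size $\lesssim\epsilon$ under the bootstrap) and the remaining factors in $L^2$. Starting from $E(0)\lesssim\epsilon^2$, integrating the differential inequality and running a continuity/bootstrap argument shows that $E(t)\lesssim\epsilon^2$ persists, and hence that the solution extends, as long as $t\lesssim\epsilon^{-2}$, giving $T^\ast = c\epsilon^{-2}$. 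I expect the main obstacle to be the same as in \cite{BMSW1}: controlling the nonlocal self-gravity nonlinearity $(I-\Hbar)z$ and the coordinate-change commutators at the cubic level uniformly in time. The constant-vorticity terms, by contrast, should contribute only the single constant-coefficient linear term described above, which is exactly why the irrotational machinery adapts with minimal change.
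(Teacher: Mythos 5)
Your proposal follows essentially the same route as the paper: verify the Taylor sign condition under $\omega_0^2<\pi$, run the transformations of \cite{BMSW1} while tracking the constant-vorticity contributions, observe that they produce only a constant-coefficient linear term, remove it by a time-dependent phase conjugation reflecting the rigid rotation of the equilibrium, and close the cubic energy/bootstrap argument to get $T^*\gtrsim\epsilon^{-2}$. The paper implements your ``replace $\zbar_t$ by $\zbar_t-i\omega_0\zbar$'' bookkeeping slightly more systematically --- it passes to the rotating frame $\zed=e^{i\omega_0 t}z$ at the outset (so that $H\zedbar_t=\zedbar_t$ and the irrotational machinery applies verbatim) and then conjugates the good unknown by $e^{-i\omega_0 t}$, which simultaneously removes the term $-2\omega_0 i\partial_t\delta$ and shifts the mass term from $\pi$ to $\pi-\omega_0^2$ --- but this is the same idea as your phase conjugation, and since the phase is $\alpha$-independent it commutes exactly with the Hilbert transform, so the commutator errors you flag do not even arise.
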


\begin{remark}
As in \cite{BMSW1} the normalization $|\Omega|=\pi$ is for notational convenience and can be removed, and the constant $v_0$ is to account for the equilibrium solutions described by balls moving at constant velocity. 
\end{remark}

\section{Proof of Theorem~\ref{thm: main cv}}
In this section we present the proof of Theorem~\ref{thm: main cv}. The proof follows closely the proof of Theorem 1.1 in \cite{BMSW1} and here we only provide the extra computations needed for the argument in \cite{BMSW1} to also prove Theorem~\ref{thm: main cv} above.

We introduce the notation $\zed(t,\alpha)=e^{i\omega_0t}z(t,\alpha)$ and note that $H\zedbar_t=\zedbar_t.$ Note that since the factor $e^{i\omega_0t}$ is independent of $\alpha$ one can replace $z$ by $\zed$ in the definition of the Hilbert transform, and in particular the conclusions of Lemma 3.7 in \cite{BMSW1} remain valid with $z$ replaced by $\zed.$ The system \eqref{z cv eq} is written in terms of $\zed$ as

\begin{align}\label{zed cv eq}
\begin{cases}
\zed_{tt}+ia\zed_\alpha=-\frac{\pi}{2}(I-\Hbar)\zed+2i\omega_0\zed_{t}+\omega_{0}^{2}\zed\\
H\zedbar_t=\zedbar_t
\end{cases}.
\end{align}
We first show the validity of the Taylor sign condition provided $\omega_0^2<\pi,$ from which local well-posedness follows as in \cite{BMSW1}. Let $\tOmega(t)$ be the domain with $\partial\tOmega(t)$ parametrized by $\zed(t,\cdot)$. We introduce the Riemann mapping $\Phi(t,\cdot): \tOmega(t)\rightarrow\bbD$, the function $h: \bbR\rightarrow\bbR$ such that

\begin{align}\label{function h}
e^{ih(t,\alpha)}=\Phi(t,\zed(t,\alpha)),
\end{align}
and the new unknowns in the Riemann mapping coordinates:

\begin{align}\label{Riemann mapping unknowns}
\begin{split}
&\Zed(t,\alphap)=\zed(t,h^{-1}(t,\alpha)),\quad \Zed_{t}(t,\alphap)=\zed_{t}(t,h^{-1}(t,\alpha)),\\
&\Zed_{tt}(t,\alphap)=\zed_{tt}(t,h^{-1}(t,\alpha)),\quad \Zed_{ttt}(t,\alphap)=\zed_{ttt}(t,h^{-1}(t,\alpha)).
\end{split}
\end{align}
The new unknowns satisfy the following system on the unit circle

\begin{align}\label{Zed eq}
\begin{cases}
&\Zed_{tt}+i\calA\Zed_{,\alphap}=-(\pi-\omega_{0}^{2})\Zed+\Gbar,\\
&\bbH\Zedbar_{t}=\Zedbar_{t}.
\end{cases}
\end{align}
where $\calA\circ h=ah_\alpha$, $\bbH$ is the Hilbert transform associated to the unit circle, and $G$ is given by

\begin{align*}
G=\frac{\pi}{2}\left((I+H)\zedbar\right)\circ h^{-1}-2\omega_{0}i\zedbar_{t}\circ h^{-1}.
\end{align*}
Note that $\Gbar$ is the boundary value of an anti-holomorphic function in the unit disc, and therefore a similar argument as the proof of Proposition 7.1 in \cite{BMSW1} gives the following result.

\begin{proposition}
$\calA_{1}:=\calA\left|\Zed_{,\alphap}\right|^{2}$ is positive and is given by

\begin{align*}
\calA_{1}:=&\frac{1}{8\pi}\int_{0}^{2\pi}\left|\Zed_{t}(t,\betap)-\Zed_{t}(t,\alphap)\right|^{2}\csc^{2}\left(\frac{\betap-\alphap}{2}\right)d\betap\\+&\frac{\pi-\omega_{0}^{2}}{8\pi}\int_{0}^{2\pi}\left|\Zed(t,\betap)-\Zed(t,\alphap)\right|^{2}\csc^{2}\left(\frac{\betap-\alphap}{2}\right)d\betap>0.
\end{align*}
\end{proposition}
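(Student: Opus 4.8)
The plan is to follow the proof of Proposition~7.1 of \cite{BMSW1} essentially verbatim, the only modifications being the replacement of the restoring coefficient by $\pi-\omega_0^2$ and the presence of one additional anti-holomorphic term in $G$. I will use four structural facts about \eqref{Zed eq}: $\Zed$, and therefore $\Zed_{,\alphap}$, are boundary values of functions holomorphic in the unit disc, since they come from the Riemann map; $\Zedbar_t$ is a holomorphic boundary value, which is precisely the second line of \eqref{Zed eq}; because the Hilbert transform $\bbH$ of the unit circle is independent of $t$, the relation $\bbH\Zedbar_t=\Zedbar_t$ can be differentiated in time to give $\bbH\Zedbar_{tt}=\Zedbar_{tt}$, so that $\Zed_t$ and $\Zed_{tt}$ are anti-holomorphic boundary values; and $\Gbar$ is an anti-holomorphic boundary value, as recorded just above the statement. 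Multiplying the first line of \eqref{Zed eq} by $\Zedbar_{,\alphap}$ and using that $\calA_1=\calA\left|\Zed_{,\alphap}\right|^2$ is real gives at once the pointwise identity $\calA_1=\Im\left[\left(-\Zed_{tt}-(\pi-\omega_0^2)\Zed+\Gbar\right)\Zedbar_{,\alphap}\right]$, and the task is to rewrite the three terms on the right.

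The computational engine is the pointwise identity
\begin{equation*}
\Im\left[\overline{F}(\alphap)\,F_{,\alphap}(\alphap)\right]=\frac{1}{8\pi}\int_{0}^{2\pi}\left|F(\betap)-F(\alphap)\right|^2\csc^2\left(\frac{\betap-\alphap}{2}\right)d\betap,
\end{equation*}
valid for every holomorphic boundary value $F$ on the disc; it is verified directly on the Fourier modes $F=e^{in\alphap}$, $n\geq0$, both sides equalling $n$. The kernel here is forced by the elementary relation $\csc^2\left(\frac{\betap-\alphap}{2}\right)=-2\,\partial_{\betap}\cot\left(\frac{\betap-\alphap}{2}\right)$, $\cot\left(\frac{\betap-\alphap}{2}\right)$ being the kernel of $\bbH$, so that the identity is produced by one integration by parts against the Cauchy kernel. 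Applied with $F=\Zed$, it turns the restoring term $-(\pi-\omega_0^2)\Zed$ into the second integral of the statement together with its coefficient $\tfrac{\pi-\omega_0^2}{8\pi}$. Applied with $F=\Zedbar_t$, which is a holomorphic boundary value, it produces the first integral, since $\left|\Zedbar_t(\betap)-\Zedbar_t(\alphap)\right|=\left|\Zed_t(\betap)-\Zed_t(\alphap)\right|$; the point is that the $\Zed_{tt}$ term is first brought, using the anti-holomorphicity of $\Zed_t$, to the shape $\Im\left[\Zed_t\,\Zedbar_{t,\alphap}\right]$ to which the engine applies. The forcing $\Gbar$ makes no net contribution: exactly the cancellation used for the gravity term in \cite{BMSW1} removes it, and this is the single place where one must check that the new vorticity piece $-2\omega_0 i\,\zedbar_t\circ h^{-1}$ of $G$ is itself anti-holomorphic so that it, too, disappears. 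Positivity is then immediate: both integrands are nonnegative, the coefficient $\pi-\omega_0^2$ is strictly positive by the standing assumption $\omega_0^2<\pi$, and since $\Zed(t,\cdot)$ is a genuine nonconstant parametrization the second integral alone is already strictly positive, whence $\calA_1>0$ and, because $\left|\Zed_{,\alphap}\right|>0$, also $\calA>0$.

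I expect the main obstacle to be precisely the reduction of the second-order-in-time term to the first integral, that is, showing that the $\Zed_{tt}$ contribution is governed by $\Zed_t$ rather than by $\Zed_{tt}$. This is the heart of Proposition~7.1 of \cite{BMSW1}: it rests on the anti-holomorphicity of $\Zedbar_t$ and $\Zedbar_{tt}$ and on an integration by parts in $\betap$ that moves a derivative onto the Cauchy kernel, and it is also where the anti-holomorphic forcing $\Gbar$ is absorbed. Granting that computation, the modifications needed for nonzero constant vorticity are purely bookkeeping: one tracks how the term $\omega_0^2\zed$ in \eqref{zed cv eq} shifts the holomorphic restoring coefficient from $\pi$ to $\pi-\omega_0^2$, and how the term $2i\omega_0\zed_t$ is relegated to the anti-holomorphic forcing $G$, so that the final formula differs from the irrotational one only through the factor $\pi-\omega_0^2$; it is exactly this factor that singles out $\omega_0^2<\pi$ as the condition guaranteeing the Taylor sign $\calA>0$.
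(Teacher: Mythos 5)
Your high-level plan --- reduce to Proposition 7.1 of \cite{BMSW1}, note that $\Gbar$ is anti-holomorphic, and track the change of the restoring coefficient from $\pi$ to $\pi-\omega_0^2$ --- is exactly what the paper does (the paper records nothing more than these observations and the citation). However, the mechanism you supply in place of that citation is mathematically incorrect, and the errors are not cosmetic. Your ``computational engine''
\begin{equation*}
\Im\left[\overline{F}(\alphap)\,F_{,\alphap}(\alphap)\right]=\frac{1}{8\pi}\int_{0}^{2\pi}\left|F(\betap)-F(\alphap)\right|^{2}\csc^{2}\left(\frac{\betap-\alphap}{2}\right)d\betap
\end{equation*}
is \emph{false} as a pointwise identity for general holomorphic boundary values $F$. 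Both sides are quadratic in $F$, so verifying it on single Fourier modes says nothing about cross terms, and the cross terms disagree: for $F=e^{im\alphap}+e^{in\alphap}$ with $0\le m<n$ the left side contributes $(m+n)\cos((n-m)\alphap)$ while the right side contributes $2m\cos((n-m)\alphap)$. Concretely, for $F=\Zed=e^{i\alphap}+\epsilon e^{2i\alphap}$ (an admissible near-circle parametrization) the two sides are $1+2\epsilon^{2}+3\epsilon\cos\alphap$ and $1+2\epsilon^{2}+2\epsilon\cos\alphap$, already different at first order in $\epsilon$. The identity holds only after integration in $\alphap$, whereas the proposition asserts a pointwise formula for $\calA_1(t,\alphap)$; hence your term-by-term evaluation of $\Im\left[\left(-\Zed_{tt}-(\pi-\omega_0^2)\Zed+\Gbar\right)\Zedbar_{,\alphap}\right]$ collapses. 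The treatment of $\Gbar$ has the same defect: $\Im\left[\Gbar\,\Zedbar_{,\alphap}\right]$ does not vanish pointwise (take $\Gbar=e^{-i\alphap}$ and $\Zedbar_{,\alphap}=-ie^{-i\alphap}$; one gets $-\cos 2\alphap$). Anti-holomorphic forcings are eliminated by applying the projection $\frac12(I-\bbH)$ to the equation, not by taking imaginary parts pointwise.

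There is a second error that matters just as much: $\Zed_{tt}$ is \emph{not} an anti-holomorphic boundary value, and your argument for it conflates two different time derivatives. By \eqref{Riemann mapping unknowns}, $\Zedbar_{tt}=\zedbar_{tt}\circ h^{-1}$, which in the flattened variable is the material derivative $(\partial_t+b\pap)\Zedbar_t$ with $b=h_t\circ h^{-1}$, not $\partial_t\Zedbar_t$; the transport term does not commute with $\bbH$, and $(I-\bbH)\Zedbar_{tt}$ equals a commutator of the form $[b,\bbH]\pap\Zedbar_t$, which is quadratic and nonzero. This is precisely the point: commutators of this type (together with $[\Zed_t,\bbH]\pap\Zedbar_t$) are what generate the double integral $\frac{1}{8\pi}\int_0^{2\pi}\left|\Zed_t(t,\betap)-\Zed_t(t,\alphap)\right|^2\csc^{2}\left(\tfrac{\betap-\alphap}{2}\right)d\betap$ in the final formula, so by assuming $\Zedbar_{tt}$ holomorphic you discard exactly the terms the formula is made of. A correct write-up here needs only: (i) the extra vorticity term $-2\omega_0 i\,\zedbar_t\circ h^{-1}$ in $G$ is a holomorphic boundary value (since $\zedbar_t$ is), so $\Gbar$ is still anti-holomorphic and is annihilated when the projection is applied; (ii) the coefficient of $\Zed$ in \eqref{Zed eq} is now $-(\pi-\omega_0^2)$; and then the projection-and-commutator argument of Proposition 7.1 of \cite{BMSW1} runs verbatim. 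Of your write-up, only the final positivity step (both integrands nonnegative, the second integral strictly positive since $\Zed$ is nonconstant, and $\pi-\omega_0^2>0$ by hypothesis) stands as is.
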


We next turn to the proof of long-time existence for small initial data. Note that $\zed_t$ is now the small quantity corresponding to $z_t$ in the irrotational case. The key point for the proof of Theorem~\ref{thm: main cv} is that $\delta:=(I-H)\ep,$ where $\ep:=|z|^2-1=|\zed|^2-1,$ still satisfies an equation without quadratic nonlinear terms. The following proposition is the analogue of Proposition 3.15 in \cite{BMSW1}.

\begin{proposition}\label{prop: delta cv eq}
Let $E$ be as in Lemma 3.13. in \cite{BMSW1}. Then the quantities $\delta$ and $\delta_t=\pt \delta$ satisfy

\begin{align}\label{delta cv eq 1}
\begin{split}
 (\pt^2+ia\pa-\pi-2\omega_{0}i\pt)\delta=\widetilde\calN_1:=&\frac{\pi}{2}(I-H)E(\ep)+\frac{\pi}{2}[E(\zed),H]\frac{\ep_\alpha}{\zed_\alpha}-2[\zed_t,H\frac{1}{\zed_\alpha}+\Hbar\frac{1}{\zedbar_\alpha}]\partial_\alpha(\zed_t\zedbar)\\
&-\frac{1}{\pi i}\int_0^{2\pi}\left(\frac{\zed_t(\beta)-\zed_t(\alpha)}{\zed(\beta)-\zed(\alpha)}\right)^2\ep_\beta(\beta)d\beta,
\end{split}
\end{align}
 and
 
\begin{align}\label{deltat cv eq 1}
\begin{split}
(\pt^2+ia\pa-\pi-2\omega_{0}i\pt)\delta_t=\widetilde\calN_2:=&-ia_t\partial_\alpha\delta+\frac{\pi}{2}\left((I-H)\dt E(\ep)-[\zed_t,H]\frac{\da E(\ep)}{\zed_\alpha}\right)+\frac{\pi}{2}[\dt E(\zed),H]\frac{\ep_\alpha}{\zed_\alpha}\\
&+\frac{\pi}{2}[E(\zed),H]\partial_t\left(\frac{\ep_\alpha}{\zed_\alpha}\right)+\frac{\pi}{2}E(\zed)[\zed_t,H]\frac{\partial_\alpha\left(\frac{\ep_\alpha}{\zed_\alpha}\right)}{\zed_\alpha}-\frac{\pi}{2}[\zed_t,H]\frac{\partial_\alpha\left(E(\zed)\frac{\ep_\alpha}{\zed_\alpha}\right)}{\zed_\alpha}\\
&+\frac{2}{\pi i}\partial_t\int_0^{2\pi}\frac{(\zed_t(\alpha)-\zed_t(\beta))\partial_\beta(\zed_t(\beta)\zedbar(\beta))\zedbar(\alpha)\zedbar(\beta)\left(\frac{\ep(\alpha)}{\zedbar(\alpha)}-\frac{\ep(\beta)}{\zedbar(\beta)}\right)}{|\zed(\beta)-\zed(\alpha)|^2}d\beta\\
&-\frac{1}{\pi i}\partial_t\int_0^{2\pi}\left(\frac{\zed_t(\beta)-\zed_t(\alpha)}{\zed(\beta)-\zed(\alpha)}\right)^2\ep_\beta(\beta)d\beta,  
\end{split}
\end{align}

\end{proposition}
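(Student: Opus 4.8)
The plan is to follow the derivation of Proposition 3.15 in \cite{BMSW1} verbatim for the part of the computation that does not involve $\omega_0$, and to isolate and control the genuinely new contributions coming from the two extra summands $2i\omega_0\zed_t+\omega_0^2\zed$ on the right side of \eqref{zed cv eq}. Write $M:=\pt^2+ia\pa-\pi$ for the operator used in \cite{BMSW1}, so that the operator in \eqref{delta cv eq 1} is $L=M-2\omega_0 i\pt$. I would compute $L\delta=L\big((I-H)\ep\big)$ by applying $M$ and $\pt$ to $\ep=\zed\zedbar-1$, substituting $\zed_{tt}$ and $\zedbar_{tt}$ from \eqref{zed cv eq} and its conjugate (using $\overline{Hf}=\Hbar\,\overline f$), and then commuting $(I-H)$ to the outside, which produces the commutators $[\pt^2,H]\ep$ and $[a\pa,H]\ep$. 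The part of this computation that does not see $\omega_0$ is \emph{algebraically identical} to the one in \cite{BMSW1}, and the manipulation turning it into the cubic expression $\widetilde{\calN}_1$ uses only mapping properties of the Hilbert transform together with the identity $H\zedbar_t=\zedbar_t$, both of which persist here. Hence that portion of the argument can be quoted rather than redone.

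The new terms to examine are the contributions $2i\omega_0(\zed_t\zedbar-\zed\zedbar_t)+2\omega_0^2\zed\zedbar$ to $M\ep$ coming from $\zed_{tt}\zedbar$ and $\zed\zedbar_{tt}$, together with the term $-2\omega_0 i\,\pt\ep$ supplied by $L$. Here I would exploit two facts about the interior Hilbert transform: the position $\zed$ is the boundary value of a function holomorphic in $\tOmega(t)$, so $H\zed=\zed$, and $H\zedbar_t=\zedbar_t$. Consequently $\zed\zedbar_t$ is the boundary value of a holomorphic function, whence $(I-H)(\zed\zedbar_t)=0$, while $(I-H)(\zed\zedbar)=(I-H)(1+\ep)=\delta$. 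Applying $(I-H)$ and using the commutator identity $\pt(Hf)=H(\pt f)+[\zed_t,H]\frac{f_\alpha}{\zed_\alpha}$, the linear-in-$\zed_t$ pieces $\pm 2i\omega_0(I-H)(\zed_t\zedbar)$ arising from $M\delta$ and from $-2\omega_0 i\,\pt\delta$ cancel one another. The remaining task is to show that what is left of the vorticity terms collapses exactly to the constant-coefficient term $-2\omega_0 i\,\pt\delta$ already displayed in $L$, so that the right side of \eqref{delta cv eq 1} is the same cubic $\widetilde{\calN}_1$ as in the irrotational case.

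Granting \eqref{delta cv eq 1}, I would obtain \eqref{deltat cv eq 1} by differentiating in $t$. Since the only $t$-dependence in $L$ is through the coefficient $a$, one has $[\pt,L]=ia_t\pa$, so that $L\delta_t=\pt(L\delta)-ia_t\pa\delta=\pt\widetilde{\calN}_1-ia_t\pa\delta$. Distributing $\pt$ over the four terms of $\widetilde{\calN}_1$ and repeatedly invoking the formula for $\pt H$ to move the time derivative inside the commutators and singular integrals produces precisely the list of terms in $\widetilde{\calN}_2$, with $-ia_t\pa\delta$ appearing as the first one. This step is mechanical and introduces no idea beyond those already present in \cite{BMSW1}.

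The main obstacle is the bookkeeping in the second step: one must confirm that the vorticity terms contribute nothing at quadratic order once $(I-H)$ is applied. Because $2i\omega_0\zed_t$ and $\omega_0^2\zed$ are linear with constant coefficients, the real danger is a stray quadratic remainder of the type $\omega_0[\zed_t,H]\frac{\ep_\alpha}{\zed_\alpha}$ generated by the time-derivative commutator, and one must verify, using the holomorphicity of $\zedbar_t$ and of products such as $\zed\zedbar_t$ together with $H\zed=\zed$, that every such term either vanishes or merges into the constant-coefficient linear part of $L$. Pinning down the exact constant multiplying $\delta$ — confirming that it is $-\pi$ as in \eqref{delta cv eq 1} and not shifted by a multiple of $\omega_0^2$ — is the delicate point, since it requires tracking the interplay between the explicit $\omega_0^2\zed$ term and the $\omega_0$-dependence of the Taylor coefficient $a$.
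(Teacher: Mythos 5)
Your overall strategy (isolate the vorticity contributions and show that, modulo the cubic terms of \cite{BMSW1}, they produce only the constant-coefficient linear term) is the right one, and you have correctly located the two danger points. But the proposal stops exactly at those points: ``one must verify that every such term either vanishes or merges'' and ``pinning down the exact constant multiplying $\delta$ \ldots is the delicate point'' are restatements of what Proposition~\ref{prop: delta cv eq} asserts, not arguments. Moreover, the route you commit to --- substituting \eqref{zed cv eq} for \emph{both} $\zed_{tt}$ and $\zedbar_{tt}$, which is what produces your list of new terms $2i\omega_0(\zed_t\zedbar-\zed\zedbar_t)+2\omega_0^2\zed\zedbar$ --- makes both verifications harder than they need to be. With that substitution the equation for $\ep$ becomes
\begin{equation*}
(\pt^2+ia\pa)\ep=-\frac{\pi}{2}\bigl(\zedbar(I-\Hbar)\zed+\zed(I-H)\zedbar\bigr)+2i\omega_0\bigl(\zed_t\zedbar-\zed\zedbar_t\bigr)+2\omega_0^2\zed\zedbar+2ia\zed\zedbar_\alpha+2\zed_t\zedbar_t,
\end{equation*}
and this has three consequences you do not address. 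First, the self-gravity terms now enter through the \emph{sum} $\zedbar(I-\Hbar)\zed+\zed(I-H)\zedbar$, whereas the identity quoted from Proposition 3.15/Lemma 3.13 of \cite{BMSW1}, which produces $\pi\delta$ plus cubic errors, is built on the \emph{difference} $\zed(I-H)\zedbar-\zedbar(I-\Hbar)\zed$; so the $\omega_0$-free part of your computation is not ``algebraically identical'' to \cite{BMSW1} and cannot simply be quoted. Second, you acquire the $O(1)$ quasilinear term $2ia\zed\zedbar_\alpha$, which has no counterpart in \cite{BMSW1}. Third, $(I-H)(2\omega_0^2\zed\zedbar)=2\omega_0^2\delta$ is a genuine linear term absent from \eqref{delta cv eq 1}, and in your scheme it can only disappear by cancellation against the $\omega_0^2$ hidden inside $a$ through $2ia\zed\zedbar_\alpha$ --- exactly the ``interplay'' you flag and leave open. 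Finally, your proposed mechanism for the stray quadratic term is wrong: $[\zed_t,H]\frac{\ep_\alpha}{\zed_\alpha}$ does not vanish by any holomorphicity of $\zedbar_t$ or $\zed\zedbar_t$; it is a genuinely nonzero quadratic quantity and must be killed by an equal and opposite term.

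The paper removes all of these issues with one algebraic choice your write-up misses: never substitute for $\zedbar_{tt}$ alone, but write $\pt^2\ep=\zedbar\zed_{tt}-\zed\zedbar_{tt}+2(\zed\zedbar_t)_t$ and use the equation only in the antisymmetric combination $\zedbar(\zed_{tt}+ia\zed_\alpha)-\zed(\zedbar_{tt}-ia\zedbar_\alpha)$. Then the two $\omega_0^2|\zed|^2$ contributions cancel identically \emph{before} $(I-H)$ is applied (so no constant ever has to be tracked against $a$), the term $2ia\zed\zedbar_\alpha$ never appears, the gravity terms come out in the difference combination so that the \cite{BMSW1} manipulation applies verbatim, and the vorticity enters only through the exact derivative $2\omega_0 i\pt\ep$. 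The one genuinely new cancellation is then explicit: $(I-H)\pt\ep=\pt\delta+[\zed_t,H]\frac{\ep_\alpha}{\zed_\alpha}$ leaves the quadratic remainder $2\omega_0 i[\zed_t,H]\frac{\ep_\alpha}{\zed_\alpha}$, and when $I-H$ is commuted past $\pt^2+ia\pa$ the commutator identity contributes $-[\zed_{tt}+ia\zed_\alpha,H]\frac{\ep_\alpha}{\zed_\alpha}$, whose vorticity part upon substituting \eqref{zed cv eq} is precisely $-2\omega_0 i[\zed_t,H]\frac{\ep_\alpha}{\zed_\alpha}$, while its $\omega_0^2$ part vanishes outright since $[\zed,H]\frac{\ep_\alpha}{\zed_\alpha}=-\frac{1}{\pi i}\int_0^{2\pi}\ep_\beta\,d\beta=0$. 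This cancellation, not holomorphicity, is what keeps $\widetilde\calN_1$ cubic, and it only becomes visible if you track the $2i\omega_0\zed_t$ term through the second-order commutator; without it, or without the antisymmetric packaging that makes it available, your outline does not close. Your reduction of \eqref{deltat cv eq 1} to time-differentiation of \eqref{delta cv eq 1} via $[\pt,L]=ia_t\pa$ agrees with the paper and is fine; the gap is entirely in the derivation of \eqref{delta cv eq 1}.
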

\begin{remark}
We note that the vorticity $2\omega_0$ gives the extra linear terms $-2\omega_0 i\partial_t\delta$ in \eqref{delta cv eq 1} and $-2\omega_0 i\partial_t\delta_t$ in \eqref{deltat cv eq 1}.

\end{remark}

\begin{proof}
Using \eqref{zed cv eq}, we write the equation satisfied by $\ep$ as

\begin{align*}
\begin{split}
 (\pt^2+ia\pa)\ep=\frac{\pi}{2}\left(\zed(I-H)\zedbar-\zedbar(I-\Hbar)\zed\right)+2\omega_{0}i\pt\ep+2(\zedbar_{t}\zed)_{t}, 
\end{split}
\end{align*}
which implies

\begin{align*}
\begin{split}
  (I-H)(\pt^2+ia\pa)\ep=\frac{\pi}{2}(I-H)\left(\zed(I-H)\zedbar-\zedbar(I-\Hbar)\zed\right)+2\omega_{0}i\pt\delta
 +2[\zed_{t},H]\frac{(\zedbar_{t}\zed)_{\alpha}}{\zed_{\alpha}}.
\end{split}
\end{align*}
Arguing as in  Proposition 3.15 of \cite{BMSW1} and using Lemma 3.7 of \cite{BMSW1} we get

\begin{align*}
\begin{split}
(\pt^2+ia\pa-2\omega_{0}i\pt)\delta  =&\frac{\pi}{2}(I-H)\left(\zed(I-H)\zedbar-\zedbar(I-\Hbar)\zed\right)+\frac{\pi}{2}[(I-\Hbar)\zed,H]\frac{\ep_\alpha}{\zed_\alpha}\\
&-2[\zed_t,H\frac{1}{\zed_\alpha}+\Hbar\frac{1}{\zedbar_\alpha}]\partial_\alpha(\zed_t\zedbar)-\frac{1}{\pi i}\int_0^{2\pi}\left(\frac{\zed_t(\beta)-\zed_t(\alpha)}{\zed(\beta)-\zed(\alpha)}\right)^2\ep_\beta(\beta)d\beta.
\end{split}
\end{align*}
Exactly the same computation as in the proof of Proposition 3.15 in \cite{BMSW1} now shows that

\begin{align*}
\begin{split}
\frac{\pi}{2}(I-H)\left(\zed(I-H)\zedbar-\zedbar(I-\Hbar)\zed\right)+\frac{\pi}{2}[(I-\Hbar)\zed,H]\frac{\ep_\alpha}{\zed_\alpha}=\pi\delta+\frac{\pi}{2}(I-H)E(\ep)+\frac{\pi}{2}[E(\zed),H]\frac{\ep_\alpha}{\zed_\alpha},
\end{split}
\end{align*}
where $E$ is as in Lemma 3.13 in \cite{BMSW1}. Combined with the previous identity this gives equation \eqref{delta cv eq 1}, and \eqref{deltat cv eq 1} follows from differentiating \eqref{delta cv eq 1} and using Lemma 3.7 in \cite{BMSW1}.
\end{proof}

To show that $a_t$ does not contribute quadratic terms to the nonlinearity in the equation for $\delta_t$ we record the following analogue of Lemma 3.16 in \cite{BMSW1}. 

\begin{lemma}\label{lem: at cv}
Let $K^*$ denote the formal adjoint of $K:=\Re H=\frac{1}{2}(H+\Hbar)$. Then
\Aligns{
(I+K^*)(a_t|\zed_\alpha|)=\Re\Bigg[\frac{-i\zed_\alpha}{|\zed_\alpha|}\Big\{&2[\zed_t,H]\frac{\zedbar_{tt\alpha}}{\zed_\alpha}+2[\zed_{tt},H]\frac{\zedbar_{t\alpha}}{\zed_\alpha}-[e^{it}g^a,H]\frac{\zedbar_{t\alpha}}{\zed_\alpha}+2\omega_{0}i[\zed_t,H]\frac{\zedbar_{t\alpha}}{\zed_\alpha}\\
&+\frac{1}{\pi i}\int_0^{2\pi}\left(\frac{\zed_t(\beta)-\zed_t(\alpha)}{\zed(\beta)-\zed(\alpha)}\right)^2\zedbar_{t\beta}(\beta)d\beta+\frac{\pi}{2}([\zed_t,H]\frac{\pa g^{h}}{\zed_\alpha})\Big\}\Bigg].
}
\end{lemma}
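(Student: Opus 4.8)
The plan is to follow the derivation of Lemma~3.16 in \cite{BMSW1} verbatim and only track the two additional terms $2\omega_0 i\zed_t+\omega_0^2\zed$ on the right-hand side of \eqref{zed cv eq}. First I would differentiate the equation of motion \eqref{zed cv eq} once in $t$ and conjugate; since $a$ and $a_t$ are real this puts the equation in the form
\[
\zedbar_{ttt}=ia_t\zedbar_\alpha+ia\zedbar_{t\alpha}+\overline{\partial_t\text{RHS}},\qquad \overline{\partial_t\text{RHS}}=-\tfrac{\pi}{2}\,\overline{\partial_t(I-\Hbar)\zed}-2\omega_0 i\,\zedbar_{tt}+\omega_0^2\,\zedbar_t.
\]
Next I would apply the anti-holomorphic projection $(I-H)$ and use the constraint $H\zedbar_t=\zedbar_t$; differentiating this constraint twice in $t$ yields
\[
(I-H)\zedbar_{ttt}=2[\partial_t,H]\zedbar_{tt}+[\partial_t,[\partial_t,H]]\zedbar_t,
\]
so that $(I-H)(ia_t\zedbar_\alpha)$ is expressed purely through commutators. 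Finally, multiplying by $\tfrac{-i\zed_\alpha}{\abs{\zed_\alpha}}$ and taking real parts turns the left-hand side into $(I+K^*)(a_t\abs{\zed_\alpha})$, exactly as in \cite{BMSW1}: the term $ia_t\zedbar_\alpha$ gives the real density $a_t\abs{\zed_\alpha}$, while $K^*$ appears as the adjoint of $K=\Re H$ paired against this density.

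With this scheme the whole computation is identical to \cite{BMSW1} except for the contribution of $-2\omega_0 i\,\zedbar_{tt}+\omega_0^2\,\zedbar_t$ to $(I-H)\overline{\partial_t\text{RHS}}$. The decisive observation is that $\zedbar_t$ is the boundary value of a holomorphic function, so $H\zedbar_t=\zedbar_t$ and hence $(I-H)(\omega_0^2\zedbar_t)=0$: the linear term $\omega_0^2\zed$ is annihilated by the projection, which is why no $\omega_0^2$ contribution survives in the statement. For the remaining piece I would use $(I-H)\zedbar_{tt}=[\partial_t,H]\zedbar_t$ (a single time differentiation of the constraint) together with the commutator identity $[\partial_t,H]\zedbar_t=[\zed_t,H]\tfrac{\zedbar_{t\alpha}}{\zed_\alpha}$ from Lemma~3.7 of \cite{BMSW1}. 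Moving $-(I-H)(-2\omega_0 i\,\zedbar_{tt})$ to the side of $a_t$ then produces exactly the single extra term $+2\omega_0 i[\zed_t,H]\tfrac{\zedbar_{t\alpha}}{\zed_\alpha}$ inside the braces, in agreement with the stated formula.

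All the other terms in the braces are produced exactly as in the proof of Lemma~3.16 of \cite{BMSW1}: the commutators $2[\zed_t,H]\tfrac{\zedbar_{tt\alpha}}{\zed_\alpha}$ and $2[\zed_{tt},H]\tfrac{\zedbar_{t\alpha}}{\zed_\alpha}$ and the squared-difference integral come from expanding $2[\partial_t,H]\zedbar_{tt}$ and $[\partial_t,[\partial_t,H]]\zedbar_t$ via Lemma~3.7 of \cite{BMSW1}, while the $g^a$ and $g^h$ terms come from $(I-H)(ia\zedbar_{t\alpha})=[ia,H]\zedbar_{t\alpha}$ and from $\partial_t(I-\Hbar)\zed$, with $g^a,g^h$ the quantities introduced in \cite{BMSW1}. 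The only other change, the shift of the background value of $a$ from $\pi$ to $\pi-\omega_0^2$, alters no term. The main point requiring care --- and the place where a sign or a missing term could hide --- is exactly this bookkeeping of the vorticity: one must confirm that $\omega_0^2\zed$ is genuinely killed by $(I-H)$ and that the surviving commutator $[\zed_t,H]\tfrac{\zedbar_{t\alpha}}{\zed_\alpha}$ is at least quadratic in the small quantity $\zed_t$ (it vanishes when $\zed_t$ is constant), so that the quadratic-free structure underpinning the $\epsilon^{-2}$ lifespan is not disturbed.
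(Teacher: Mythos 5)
Your overall scheme is the one the paper follows: differentiate \eqref{zed cv eq} in time, conjugate, apply $(I-H)$, then multiply by $-i\zed_\alpha/\abs{\zed_\alpha}$ and take real parts to produce $(I+K^*)(a_t\abs{\zed_\alpha})$. Your treatment of the two new linear terms also agrees with the paper's proof: $(I-H)(\omega_0^2\zedbar_t)=0$ by the constraint, and your identity $(I-H)\zedbar_{tt}=[\partial_t,H]\zedbar_t=[\zed_t,H]\frac{\zedbar_{t\alpha}}{\zed_\alpha}$ is precisely the computation the paper phrases by writing $\zedbar_{tt}=F_t+\zed_t\zedbar_{t\alpha}/\zed_\alpha$ with $F$ the holomorphic extension of $\zedbar_t$.

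There is, however, a concrete gap in your bookkeeping, at exactly the spot you flag as the dangerous one. The expansion $(I-H)\zedbar_{ttt}=2[\partial_t,H]\zedbar_{tt}+[\partial_t,[\partial_t,H]]\zedbar_t$ produces $2[\zed_t,H]\frac{\zedbar_{tt\alpha}}{\zed_\alpha}$, the squared-difference integral, but only \emph{one} copy of $[\zed_{tt},H]\frac{\zedbar_{t\alpha}}{\zed_\alpha}$, not two as you claim. The second copy, together with the $g^a$ term, comes from the step $(I-H)(ia\zedbar_{t\alpha})=[ia\zed_\alpha,H]\frac{\zedbar_{t\alpha}}{\zed_\alpha}$ (valid since $(I-H)\frac{\zedbar_{t\alpha}}{\zed_\alpha}=0$), into which one must substitute the equation of motion. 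This is where your assertion that ``the whole computation is identical to \cite{BMSW1}'' fails: here the substituted quantity is $ia\zed_\alpha=-\zed_{tt}-(\pi-\omega_0^{2})\zed+e^{i\omega_0 t}g^a+2i\omega_0\zed_t$, and while the $\omega_0^{2}\zed$ piece is harmless because $[\zed,H]\frac{\zedbar_{t\alpha}}{\zed_\alpha}=0$, the new piece $2i\omega_0\zed_t$ contributes $-2\omega_0 i[\zed_t,H]\frac{\zedbar_{t\alpha}}{\zed_\alpha}$ after the minus sign in $-(I-H)(ia\zedbar_{t\alpha})$ is included. That is exactly the negative of the term $+2\omega_0 i[\zed_t,H]\frac{\zedbar_{t\alpha}}{\zed_\alpha}$ which you (correctly) extract from $2\omega_0 i(I-H)\zedbar_{tt}$; the relative sign is convention-independent, since $-\zed_{tt}$ and $+2i\omega_0\zed_t$ enter $ia\zed_\alpha$ with opposite signs while conjugation sends $+2i\omega_0\zed_t$ to $-2i\omega_0\zedbar_t$. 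Carried out in full, your recipe therefore lands on a right-hand side with \emph{no} net $\omega_0$-commutator term, which is not the displayed identity; the decisive vorticity bookkeeping is left unresolved. For comparison, the paper's own proof records only the projection of $-2\omega_0 i\zedbar_{tt}$ and defers everything else verbatim to Lemma 3.16 of \cite{BMSW1}, so it too is silent on this second contribution; and none of this affects how the lemma is used downstream, since every term at stake is quadratic in $(\zed_t,\zed_{tt})$, which is all the energy argument requires. But as a proof of the stated formula, your attempt needs to track the $2i\omega_0\zed_t$ term through the substitution step and reconcile the result with the statement.
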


\begin{proof}
The proof is the same as that of Lemma 3.16 in \cite{BMSW1}. The only modification is that differentiating \eqref{zed cv eq} in the time variable we get the following equation for $\zedbar_t:$

\begin{align*}
\begin{split}
 \zedbar_{ttt}-ia\zedbar_{t\alpha}=ia_t\zedbar_\alpha+\frac{\pi}{2}[\zed_t,H]\frac{\zedbar_\alpha}{\zed_\alpha}+\zedbar_t-2\omega_{0}i\zedbar_{tt}. 
\end{split}
\end{align*}
The only new term compared to the equation for $\zbar_t$ in Lemma 3.16 in \cite{BMSW1} is the last term on the right hand side. For this note that if $F(t,\zed)$ is the holomorphic function with boundary value $\zedbar_t$ then

\begin{align*}
\begin{split}
\zedbar_{tt}=F_t+\frac{\zedbar_{t\alpha}\zed_t}{\zed_\alpha}=F_t+\frac{\zedbar_{t\alpha}\zed_t}{\zed_\alpha}. 
\end{split}
\end{align*}
Since $\zedbar_t$ and $\zed$ have the same holomorphicity properties as $\zbar_t$ and $z$ in the irrotational case, the rest of the proof is exactly the same as that of Lemma 3.16 in \cite{BMSW1}.
\end{proof}

We now define the change of coordinates $k$ similarly to Remark 3.21 in \cite{BMSW1} and show that in the new coordinate $\alphap=k(t,\alpha)$ the equations for $\delta$ and $\delta_t$ have no quadratic nonlinearities. The precise identities are given in the following proposition.

\begin{proposition}\label{prop: k cv}
Suppose $z(t,\cdot)$ is a simple closed curve containing the origin in its simply connected interior for each $t\in I,$ where $I$ is some time interval, and let $k$ be as defined in Remark 3.21 in \cite{BMSW1}, but with $z$ replaced by $\zed,$ that is $(I-H)(\log(\zedbar e^{ik}))=0$. Then

\begin{align*}
\begin{split}
&(I-H)k_t= -i(I-H)\frac{\zedbar_{t}\ep}{\zedbar }-i[\zed_{t},H]\frac{\left(\log(\zedbar e^{ik})\right)_{\alpha}}{\zed_{\alpha}},\\
&(I-H)k_{tt}=-i(I-H)\frac{\zedbar_{tt}\ep+\zedbar_{t}\ep_{t}}{\zedbar}+i(I-H)\frac{\zedbar_{t}^{2}\ep}{\zedbar^{2}}-i[\zed_{t},H]\frac{(\log(\zedbar e^{ik}))_{t\alpha}+ik_{t\alpha}}{\zed_{\alpha}}+i[\zed_{t},H]\frac{1}{\zed_{\alpha}}\pa\left(\frac{\zedbar_{t}\ep}{\zedbar}\right)\\
&\qquad\qquad\qquad\quad-i[\zed_{tt},H]\frac{(\log(\zedbar e^{ik}))_{\alpha}}{\zed_{\alpha}}-\frac{1}{\pi}\int_{0}^{2\pi}\left(\frac{\zed_{t}(\beta)-\zed_{t}(\alpha)}{\zed(\beta)-\zed(\alpha)}\right)^{2}(\log(\zedbar e^{ik}))_{\beta}d\beta,\\
&(I-H)(ak_\alpha)=[\zed_{t},H]\frac{(\zedbar_{t}\zed)_{\alpha}}{\zed_{\alpha}}-[\zed_t,H]\zedbar_t\\
&\qquad\qquad\qquad\quad-(I-H)\frac{(\zedbar_{tt}+2\omega_{0}i\zedbar_t)\ep}{\zedbar}+(I-H)\frac{e^{-it}g^{h}\ep}{\zedbar}+[\zed_{tt}-2\omega_{0}i\zed_t-e^{it}g^{a},H]\frac{\left(\log(\zedbar e^{ik})\right)_{\alpha}}{\zed_{\alpha}}.
\end{split}
\end{align*}
Moreover if $F$ is a holomorphic function with boundary value $\zedbar e^{ik}$ and satisfies $F(t,0)\in\R_{+}$ for all $t\in I,$ then with the notation $\AV(f):=\frac{1}{2}[z,H]\frac{f}{z}$,

\Aligns{
&\AV (\ep)=0,\\
&\AV(a k_\alpha)=-(\pi-\omega_{0}^{2})+\frac{\omega_{0}}{\pi}\int_0^{2\pi}\frac{\zedbar_t\ep\zed_\beta}{|\zed|^2}d\beta+\frac{1}{2\pi i}\int_0^{2\pi}\zedbar_t\zed_{t\beta}d\beta+\frac{1}{2\pi i}\int_0^{2\pi}\frac{(\zedbar_{tt}-e^{-it}g^h)\ep \zed_\beta}{|\zed|^2}d\beta,\\
&\qquad\qquad\quad-\frac{1}{2\pi i}\int_0^{2\pi}\left(\frac{\zed_{tt}-2i\zed_t-e^{it}g^a}{\zed}\right)\partial_\beta\log Fd\beta,\\
&\Re \AV (k_t)=\frac{\Re}{2\pi }\int_0^{2\pi}\frac{\zedbar_t\ep}{|\zed|^2}\zed_\beta d\beta-\frac{\Re}{2\pi}\int_0^{2\pi}\log F \left(\frac{\zed\zed_{t\beta}-\zed_t\zed_\beta}{\zed^2}\right)d\beta,\\
&\Re\AV(k_{tt})=\frac{\Im}{2\pi}\int_0^{2\pi}\left(\frac{\zed_{t\beta}\zed-\zed_t\zed_\beta}{\zed^2}\right)k_td\beta+\frac{\Re}{2\pi}\pt\int_0^{2\pi}\frac{\zedbar_t\ep}{|\zed|^2}\zed_\beta d\beta+\frac{\Re}{2\pi}\pt \int_0^{2\pi}(\log (\zedbar e^{ik}))_\beta \frac{\zed_t}{\zed}d\beta.
} 
\end{proposition}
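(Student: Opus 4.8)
The plan is to read off every identity by differentiating the defining relation $(I-H)\log(\zedbar e^{ik})=0$ and, where a spatial derivative must be converted, feeding in the evolution equation \eqref{zed cv eq}; all of the analytic work is carried by the commutator identities of Lemma~3.7 in \cite{BMSW1}. Two structural facts drive the computation. First, $H\zedbar_t=\zedbar_t$ and $H\zed=\zed$ (both being boundary values of functions holomorphic in $\tOmega$), so any product of $\zedbar_t$ and $\zed$ is fixed by $H$ and hence annihilated by $I-H$. Second, the material-derivative commutator $\partial_t(Hf)-H\partial_t f=[\zed_t,H]\frac{f_\alpha}{\zed_\alpha}$, together with its $\partial_\alpha$ analogue, when applied twice produces exactly the quadratic kernel $\left(\frac{\zed_t(\beta)-\zed_t(\alpha)}{\zed(\beta)-\zed(\alpha)}\right)^2$ seen in the formulas for $k_{tt}$ and $(I-H)(ak_\alpha)$. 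Because $e^{i\omega_0 t}$ is constant in $\alpha$, none of these commutator relations changes on passing from $z$ to $\zed$, so the only genuinely new contributions are the explicit vorticity terms that \eqref{zed cv eq} carries.

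For $(I-H)k_t$ I would differentiate the defining relation in $t$. Since $\partial_t\log(\zedbar e^{ik})=\frac{\zedbar_t}{\zedbar}+ik_t$, the commutator identity gives $(I-H)\big(\frac{\zedbar_t}{\zedbar}+ik_t\big)=[\zed_t,H]\frac{(\log(\zedbar e^{ik}))_\alpha}{\zed_\alpha}$. The decisive algebraic step is the exact splitting $\frac{\zedbar_t}{\zedbar}=\zedbar_t\zed-\frac{\zedbar_t\ep}{\zedbar}$, which isolates the holomorphic piece $\zedbar_t\zed$ (killed by $I-H$) and leaves precisely $-\frac{\zedbar_t\ep}{\zedbar}$; solving for $(I-H)k_t$ yields the first formula. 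Differentiating this once more in $t$ produces $(I-H)k_{tt}$: the second pass of $[\zed_t,H]\frac{\cdot_\alpha}{\zed_\alpha}$ yields the $\left(\frac{\zed_t(\beta)-\zed_t(\alpha)}{\zed(\beta)-\zed(\alpha)}\right)^2$ integral, $\partial_t$ of the splitting yields the $\frac{\zedbar_{tt}\ep+\zedbar_t\ep_t}{\zedbar}$ and $\frac{\zedbar_t^2\ep}{\zedbar^2}$ terms, and differentiating $\zed_t$ inside the commutator produces the $[\zed_{tt},H]$ term.

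The identity for $(I-H)(ak_\alpha)$ is where \eqref{zed cv eq} is used. I would differentiate the defining relation in $\alpha$, multiply by $a$, and replace $ia\zed_\alpha$ by the right-hand side of \eqref{zed cv eq}. The extra linear terms $2\omega_0 i\zed_t$ and $\omega_0^2\zed$ are precisely what assemble into $-(I-H)\frac{(\zedbar_{tt}+2\omega_0 i\zedbar_t)\ep}{\zedbar}$ and into the commutator $[\zed_{tt}-2\omega_0 i\zed_t-e^{it}g^a,H]\frac{(\log(\zedbar e^{ik}))_\alpha}{\zed_\alpha}$, so that the result is the formula of Remark~3.21 in \cite{BMSW1} with these $\omega_0$ insertions. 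For the four average identities I would pass to the holomorphic extension $F$ of $\zedbar e^{ik}$ and extract constant Fourier modes through $\AV(f)=\frac12[\zed,H]\frac{f}{\zed}$, which evaluates the relevant holomorphic function at the origin. The vanishing $\AV(\ep)=0$ comes from $\frac{\ep}{\zed}=\zedbar-\frac1\zed$ together with the normalization $|\Omega|=\pi$; $\AV(ak_\alpha)$ is the average of the third identity with \eqref{zed cv eq} substituted once more, which is what produces the constant $-(\pi-\omega_0^2)$ (matching the equilibrium value $a=\pi-\omega_0^2$) and the $\omega_0$-weighted integral; and $\Re\AV(k_t)$, $\Re\AV(k_{tt})$ are the averages of the first two identities, the normalization $F(t,0)\in\R_+$ being what legitimizes taking real parts.

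The differentiation and commutator bookkeeping are routine. The delicate part is twofold. First, in the $ak_\alpha$ computation one must check that the vorticity terms $2\omega_0 i\zed_t$ and $\omega_0^2\zed$ land exactly in the claimed positions and generate no stray contribution; this hinges on the interplay of $2\omega_0 i\zed_t$ with the holomorphicity of $\zedbar_t$. Second, the average computations, above all $\AV(ak_\alpha)$, require a careful Cauchy-integral evaluation at the origin of $F$ (using $F(t,0)\in\R_+$ and $|\Omega|=\pi$), and it is there that the exact constant $-(\pi-\omega_0^2)$ and the $\omega_0$-weighted integrals must be extracted. I expect this last step to be the most error-prone.
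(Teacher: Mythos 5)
Your proposal is correct and follows essentially the same route as the paper's proof, which simply invokes Propositions 3.18, 3.20, and 5.14 of \cite{BMSW1}: the $k_t$ and $k_{tt}$ identities use only the holomorphicity of $\zed$ and $\zedbar_t$ together with the commutator identities of Lemma 3.7 (no use of the evolution equation), while the $(I-H)(ak_\alpha)$ identity and the averages are obtained by substituting the vorticity-modified equation \eqref{zed cv eq}, whose right-hand side is rewritten as $-(\pi-\omega_0^2)\zed$ plus the anti-holomorphic and vorticity terms. You reconstruct in outline exactly the argument the paper cites, correctly locating where the $\omega_0$-terms enter.
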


\begin{proof}
The proof is the the same as those of Propositions 3.18, 3.20, and 5.14 in \cite{BMSW1}. Indeed for the identities involving $k_{t}$ and $k_{tt}$ it suffices to note that $\zed_t$ and $\zed$ have the same holomorphicity as $z_{t}$ and $z$ as in \cite{BMSW1} and the derivation does not rely on the first equations in \eqref{z cv eq} and \eqref{zed cv eq}. The second identity follows from the same argument as in Proposition 3.18 of \cite{BMSW1}. The only difference is that instead of $-\pi z+g^{a}$, the right hand side of the first equation in \eqref{zed cv eq} can be written as $-(\pi-\omega^{2}_{0})\zed+e^{it}g^{a}+2i\zed_{t}$. The computation of the averages follows from similar modifications of the proof of Proposition 3.20 in \cite{BMSW1}. 
\end{proof}

Note that the computations for $\AV(ak_\alpha)$ and for the static solution in the introduction show that $a$ is close to $\pi-\omega^{2}_{0},$ whereas the ``negative Klein-Gordon" term in the equation for $\delta$ is still $-\pi\delta.$ This can be simply rectified by introducing the new unknown

\begin{align*}
\begin{split}
\tilde\delta:=e^{-\omega_{0}it}\delta.  
\end{split}
\end{align*}
With this definition $\tilde\delta$ satisfies

\begin{align}\label{tdelta cv eq}
\begin{split}
&(\pt^2+ia\pa-(\pi-\omega^{2}_{0}))\tilde\delta=\widetilde\calM_1:=
e^{-\omega_{0}it}\widetilde \calN_1,\\
&(\pt^2+ia\pa-(\pi-\omega_{0}^{2}))\tilde\delta_t=\widetilde\calM_2:=
e^{-\omega_{0}it}(\widetilde \calN_2+i\widetilde\calN_1).
\end{split}
\end{align}
With the same notation as in \cite{BMSW1} and with $\widetilde N_j =\widetilde\calM_j\circ k^{-1},~j=1,2,$ we rewrite the equations for $\chi:=\tilde\delta\circ k^{-1}$ and $v=(\pt\tilde\delta)\circ k^{-1}$  as

\begin{align}\label{chi v cv eq}
\begin{split}
&(\pt+b\pap)^2\chi+iA\pap\chi-(\pi-\omega_{0}^{2})\chi=\widetilde{N}_1,\\
&(\pt+b\pap)^2v+iA\pap v- (\pi-\omega_{0}^{2}) v=\widetilde N_2.
\end{split}
\end{align}
We can now prove Theorem~\ref{thm: main cv}.

\begin{proof}[Proof of Theorem~\ref{thm: main cv}]
Since equation \eqref{chi v cv eq} has the same form as the equation studied in \cite{BMSW1} the energy estimates are exactly the same. In view of Propositions~\ref{prop: delta cv eq} and~\ref{prop: k cv} and Lemma~\ref{lem: at cv} the right hand sides of the equations in~\eqref{chi v cv eq} contain no quadratic terms. Similarly, the identity for $k_{tt}$ in Proposition~\ref{prop: k cv} shows that the contributions of $(\pt+b\pap)b$ which arise in the higher energy estimates as in Proposition 5.15 of \cite{BMSW1} do not contain quadratic terms. Therefore the only remaining step in the proof is to verify that $\tilde v:=(I-H)v$ also satisfies an equation with no quadratic nonlinearities, analogous to the equation derived in Proposition 5.11 in \cite{BMSW1}. The computation here is similar and we only present the necessary modifications. We use the same proof as in Proposition 5.11 in \cite{BMSW1} replacing $z$ by $\zed$ throughout. In the first step in the commutator $[\pt^2+ia\pa-(\pi-\omega_{0}^{2}),H]\tilde\delta_t$ we get the extra term $2\omega_0i[\zed_t,H]\frac{\tilde\delta_{t\alpha}}{\zed_\alpha}$ compared to \cite{BMSW1}. This can be written as

\begin{align*}
\begin{split}
[\zed_t,H]\frac{\tilde\delta_{t\alpha}}{\zed_\alpha}=&-\omega_{0}i[\zed_t,H]\frac{\tilde\delta_\alpha}{\zed_\alpha}+[\zed_t,H]\frac{e^{-\omega_{0}it}\pa(I-H)\ep_t}{\zed_\alpha} -[\zed_t,H]\frac{e^{-\omega_{0}it}[\zed_t,H]\frac{\ep_\alpha}{\zed_\alpha}}{\zed_\alpha}.
\end{split}
\end{align*}
The last term is already cubic. To see that $[\zed_t,H]\frac{\tilde\delta_\alpha}{\zed_\alpha}$ is also cubic note that
 
\begin{align*}
\begin{split}
e^{\omega_{0}it}\tilde\delta=(I-H)\ep=(I+\Hbar)\ep-(H+\Hbar)\ep=(I+\Hbar)\ep-\zed[\ep,H]\frac{\ep_\alpha}{\zed_\alpha}+E(\ep).
\end{split}
\end{align*}
The contribution of $-\zed[\ep,H]\frac{\ep_\alpha}{\zed_\alpha}+E(\ep)$ to $[\zed_t,H]\frac{\tilde\delta_\alpha}{\zed_\alpha}$ is clearly cubic and for $(I+\Hbar)\ep$ note that

\begin{align*}
\begin{split}
[\zed_t,H]\frac{e^{-\omega_{0}it}\pa(I+\Hbar)\ep}{\zed_\alpha}=[\zed_t,H\frac{1}{\zed_\alpha}+\Hbar\frac{1}{\zedbar_\alpha}]e^{-\omega_{0}it}\pa(I+\Hbar)\ep,
\end{split}
\end{align*}
which is cubic. The contribution of $[\zed_t,H]\frac{e^{-\omega_{0}it}\pa(I-H)\ep_t}{\zed_\alpha}$ is shown to be cubic in a similar way. The only other computation which is different from the proof of Proposition 5.11 in \cite{BMSW1} is that of the term $II:=-2[\zed_t,H]\frac{\pa(ia\pa\tilde\delta)}{\zed_\alpha}$ on page 46 of \cite{BMSW1}, where we use equation \eqref{zed cv eq} for $\zed$ instead of the equation for $z$ in \cite{BMSW1}. Here the extra terms we get are

\begin{align*}
\begin{split}
-2\omega_{0}i[\zed_t,H]\frac{\pa}{\zed_\alpha}\left(\frac{\zed_t\tilde\delta_\alpha}{\zed_\alpha}\right)-\omega_{0}^{2}[\zed_t,H]\frac{\pa}{\zed_\alpha}\left(\frac{\zed\tilde\delta_\alpha}{\zed_\alpha}\right). 
\end{split}
\end{align*}
The first term is already cubic and the second term is identical, up to a multiplicative constant, to one of the terms already computed in the calculation of $II$ in \cite{BMSW1}. This shows that the equation for $\tilde v$ contains no quadratic nonlinearities and this completes the proof of Theorem~\ref{thm: main cv}.
\end{proof}
\bibliographystyle{plain}
\bibliography{twobodybib}

\begin{thebibliography}{1}

\bibitem{BMSW1}
L.~{Bieri}, S.~{Miao}, S.~{Shahshahani}, and S.~{Wu}.
\newblock {On the Motion of a Self-Gravitating Incompressible Fluid with Free
  Boundary}.
\newblock {\em ArXiv e-prints}, November 2015.

\bibitem{IfrTat3}
M.~{Ifrim} and D.~{Tataru}.
\newblock {Two dimensional gravity water waves with constant vorticity: I.
  Cubic lifespan}.
\newblock {\em ArXiv e-prints}, October 2015.

\bibitem{Wu3}
S.~Wu.
\newblock Almost global wellposedness of the 2-{D} full water wave problem.
\newblock {\em Invent. Math.}, 177(1):45--135, 2009.

\end{thebibliography}

 \bigskip

\centerline{\scshape Lydia Bieri, Shuang Miao, Sohrab Shahshahani, Sijue Wu}
\medskip
{\footnotesize
 \centerline{Department of Mathematics, The University of Michigan}
\centerline{2074 East Hall, 530 Church Street
Ann Arbor, MI  48109-1043, U.S.A.}
\centerline{\email{lbieri@umich.edu, shmiao@umich.edu, shahshah@umich.edu, sijue@umich.edu}}

\end{document}